\theoremstyle{definition}
\newtheorem{thm}{\textbf{Theorem} }[section]
\newtheorem{conj}{\textbf{Conjecture}}
\newtheorem{prop}[thm]{\textbf{Proposition}}
\numberwithin{equation}{section} \makeatletter
\renewenvironment{proof}[1][\proofname]{\par
    \pushQED{\qed}%
    \normalfont \topsep6\p@\@plus6\p@ \labelsep1em\relax
    \trivlist
    \item[\hskip\labelsep
        \bfseries #1]\ignorespaces
}{%
    \popQED\endtrivlist\@endpefalse
} \makeatother
\renewcommand{\proofname}{Proof}
\title{\bfseries \large{A note on $0$-bipolar knots of concordance order two}}
\author{ \normalsize{Wenzhao Chen}\footnote{chenwenz@math.msu.edu}}
\date{}
\begin{document}
\maketitle
\parindent = 18pt
%
\begin{abstract}
\setlength{\parindent}{1pt} \setlength{\parskip}{1.5ex plus 0.5ex
minus 0.2ex} \noindent
Let $\mathcal{T}$ be the group of smooth concordance classes of topologically slice knots, and $\{0\}\subset\cdots\subset \mathcal{T}_{n+1}\subset\mathcal{T}_{n}\subset \cdots\subset \mathcal{T}_{0}\subset \mathcal{T}$ be the bipolar filtration. In this paper, we show that a proper collection of the knots employed by Hedden, Kim, and Livingston to prove $\mathbb{Z}_2^{\infty} < \mathcal{T}$ can be used to see $\mathbb{Z}_2^{\infty} < \mathcal{T}_0/\mathcal{T}_1$.
\end{abstract}

\section{Introduction}
Viewing the 3-sphere as the boundary of the 4-ball, a knot $K\subset S^3$ is said to be \textit{smoothly slice} if it bounds a smoothly embedded disk in $D^4$. An analogous notion called \textit{topological sliceness} is defined if one allows the aforementioned disk to be topologically embeded and locally flat. These two notions of sliceness are different, manifesting in the nontriviality of the group of smooth concordance classes of topologically slice knots $\mathcal{T}$. As an attempt to understand finer structure of the gap between the smooth and topological category, Cochran, Harvey, and Horn introduced the so-called bipolar filtration on $\mathcal{T}$:$$\{0\}\subset\cdots\subset \mathcal{T}_{n+1}\subset\mathcal{T}_{n}\subset \cdots\subset \mathcal{T}_{0}\subset \mathcal{T},$$ where $\mathcal{T}_n$ is the subgroup generated by $n$-bipolar topologically slice knots \cite{CHH13}. Here, a knot is said to be \emph{$n$-bipolar} if it is both \emph{$n$-positive} and \emph{$n$-negative}, which we recall below.

A knot $K$ (not necessarily topologically slice) is said to be \emph{$n$-positive} (resp.\ \emph{$n$-negative}) if it bounds a smoothly embedded disk $\Delta$ in a four-manifold $V$ such that
\begin{enumerate}
\item $\pi_1(V)=0$;
\item $V$ has positive definite (resp.\ negative definite) intersection form; 
\item $H_2(V)$ has a basis represented by a collection of surfaces $\{S_i\}$ disjointly embedded in $V\setminus \Delta$ such that $\pi_1(S_i)\subset \pi_1(V\setminus\Delta)^{(n)}$ for each $i$, where $\pi_1(V\setminus\Delta)^{(n)}$ denotes the $n$-th derived subgroup of $\pi_1(V\setminus\Delta)$.
\end{enumerate}

Loosely speaking, the third condition in the above definition implies that the deeper a knot $K$ lies in the filtration, the simpler the corresponding surfaces $\{S_i\}$ are. It is better to appreciate this condition by considering an extreme case. Note if the $S_i$'s were actually spheres, then by the first two conditions and the diagonalization theorem of Donaldson \cite{Don83}, one can see that the $S_i$'s have self-intersection $\pm{1}$ and hence can be repeatedly blown down, achieving a homotopy 4-ball in which the knot bounds a disk. 

One natural theme regarding the bipolar filtration is to understand its nontriviality. In the paper where the filtration is introduced, Cochran, Harvey, and Horn showed $\mathbb{Z}^{\infty}<\mathcal{T}/\mathcal{T}_0$ and $\mathcal{T}_1/\mathcal{T}_2$ has positive rank \cite{CHH13}. Later, Cochran and Horn showed $\mathbb{Z}^{\infty}<\mathcal{T}_0/\mathcal{T}_1$ \cite{CH15}. The nontriviality of deep levels of the filtration is recently proved by Cha and Kim, who showed $\mathbb{Z}^{\infty}<\mathcal{T}_i/\mathcal{T}_{i+1}$ for all $i\geq 2$ \cite{CK17}.  This note aims at enriching the list. We observe that the knots studied by Hedden, Kim, and Livingston can be used to show that $\mathcal{T}_0/\mathcal{T}_1$ also contains a infinite subgroup generated by 2-torsion elements.
\begin{thm}\label{main}
$\mathbb{Z}_2^{\infty} < \mathcal{T}_0/\mathcal{T}_1$.
\end{thm}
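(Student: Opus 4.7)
The plan is to select a suitable subfamily of the topologically slice knots constructed by Hedden, Kim, and Livingston and verify two things: that each such knot lies in $\mathcal{T}_0$, and that no nontrivial $\mathbb{Z}_2$-linear combination of them lies in $\mathcal{T}_1$. The second part is the genuine obstacle.

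First, I would pin down an infinite subfamily $\{K_i\}_{i \geq 1}$ among the HKL knots. These knots are topologically slice and negative amphichiral (so they have order at most $2$ in $\mathcal{T}$), and HKL's original $\mathbb{Z}_2^{\infty} < \mathcal{T}$ result distinguishes them smoothly using $d$-invariants of the double branched cover $\Sigma_2(K_i)$ in carefully chosen spin$^c$ structures. I would choose the subfamily so that its $d$-invariant data are linearly independent over $\mathbb{Z}_2$ — this is essentially built into HKL's argument and only needs to be made explicit.

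Second, I would verify that each $K_i \in \mathcal{T}_0$. Since the condition $\pi_1(S_j) \subset \pi_1(V \setminus \Delta)^{(0)}$ is vacuous, $0$-bipolarity amounts to exhibiting simply-connected positive (resp.\ negative) definite $4$-manifolds $V^{\pm}$ with bases of $H_2$ represented by embedded surfaces disjoint from a smooth slice disk for $K_i$. Such $V^{\pm}$ should be readable off the surgery/satellite description of the HKL knots; the negative amphichiral symmetry essentially produces the $V^+$ and $V^-$ cases from one another, so only one side really needs to be constructed by hand.

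Third, and this is the main obstacle, I would show that no nontrivial sum $K = \#_i \,\epsilon_i K_i$ with $\epsilon_i \in \mathbb{Z}_2$ lies in $\mathcal{T}_1$. If $K$ were $1$-bipolar, then $\Sigma_2(K)$ would bound a simply-connected definite $4$-manifold $W$ whose $H_2$-basis surfaces $\{S_j\}$ satisfy $\pi_1(S_j) \subset \pi_1(W \setminus \Sigma_2(K))^{(1)}$, and this translates into sharp constraints on $d(\Sigma_2(K), \mathfrak{s})$ in the spin$^c$ structures of interest via the $d$-invariant obstruction machinery for the bipolar filtration pioneered by Cochran--Horn \cite{CH15} and further developed by Cha--Kim \cite{CK17}. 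The heart of the proof is to show that the $d$-invariant values computed by HKL violate these tighter bounds, even after accounting for the spin$^c$ symmetry forced by negative amphichirality and the passage to $\mathbb{Z}_2$-linear combinations. I expect essentially no new knot-theoretic input beyond HKL and Cochran--Horn to be needed; the work lies in carefully composing the two obstructions so that HKL's numerical inequalities remain effective at the level of $\mathcal{T}_0/\mathcal{T}_1$.
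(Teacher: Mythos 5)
Your outline follows the paper's strategy (the Hedden--Kim--Livingston knots, $0$-bipolarity via the amphichiral symmetry plus an explicit definite manifold on one side, and the Cochran--Harvey--Horn $d$-invariant obstruction for membership in $\mathcal{T}_1$), but it leaves open the one step that actually requires a new idea, and as stated that step does not close. The obstruction for $K\in\mathcal{B}_1$ is one-sided: it produces two metabolizers $G_1,G_2$ of the linking form on $H_1(M(K))$ with $d(M(K),\mathfrak{s}_0+PD[z_1])\le 0$ for $z_1\in G_1$ and $d(M(K),\mathfrak{s}_0+PD[z_2])\ge 0$ for $z_2\in G_2$, and $G_1$ need not equal $G_2$. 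What HKL actually establish (and what their topological non-sliceness argument uses) is that for all but at most four values of $k$ the quantities $d(M(K_{D_k,n}),\mathfrak{s}_x)+d(M(K_{U,n}),\mathfrak{s}_{bx})$ are not \emph{identically zero} as $x$ ranges over $\mathbb{Z}_{4n^2+1}$; they do not control the signs of these quantities. So ``the $d$-invariant values computed by HKL violate these tighter bounds'' is exactly the assertion you have not justified: a priori the values could be all $\ge 0$ on one metabolizer and all $\le 0$ on the other without ever all vanishing, in which case the $\mathcal{B}_1$ obstruction is silent.

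The paper bridges this gap with a short averaging argument: each metabolizer is generated by $(1,b)$ with $b^2+1\equiv 0 \pmod{4n^2+1}$, so $b$ is a unit and $x\mapsto bx$ permutes $\mathbb{Z}_{4n^2+1}$; hence $\sum_x \bigl(d(M(K_{D_k,n}),\mathfrak{s}_x)+d(M(K_{U,n}),\mathfrak{s}_{b_ix})\bigr)$ is independent of $i$. The two one-sided inequalities then force this common sum to be zero, and since all summands coming from one fixed metabolizer have the same sign, each summand vanishes --- which is precisely the identical vanishing that HKL's computation rules out for a suitable $k$. Two further points you gloss over: $0$-bipolarity is not automatic for all HKL knots, since the blow-down undoing each $D$-infection introduces extra band twisting, so one needs $n\ge 4k$ to keep the leftover twists negative, and this must be checked to be compatible with the at-most-four-exceptional-$k$ count (it is, for $n>20$ and $0<k\le n/4$). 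And for connected sums, your appeal to ``linear independence of $d$-invariant data over $\mathbb{Z}_2$'' should be replaced by the concrete mechanism: choose the $n$'s so that the integers $4n^2+1$ are pairwise coprime (via Iwaniec), so that every metabolizer splits over the summands, and use $d(M(K),\mathfrak{s}_0)=0$ for knots of smooth concordance order two to kill the contribution of the remaining summands at the trivial $\text{spin}^c$ structure.
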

It is natural to believe similar results should be true for successive quotients of deeper levels.
\begin{conj}
$\mathbb{Z}_2^{\infty} < \mathcal{T}_i/\mathcal{T}_{i+1}$ for all $i\in \mathbb{N}$. 
\end{conj}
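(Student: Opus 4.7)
The plan is to produce, for each $i \in \mathbb{N}$, an infinite family of topologically slice knots $\{J_{i,k}\}_{k=1}^\infty \subset \mathcal{T}_i$, each of smooth concordance order at most two, which are $\mathbb{Z}_2$-linearly independent modulo $\mathcal{T}_{i+1}$. The construction will combine two ingredients: the iterated infection technique by which Cha-Kim established $\mathbb{Z}^\infty < \mathcal{T}_i/\mathcal{T}_{i+1}$ for $i\geq 2$ in \cite{CK17}, and the 2-torsion seeds supplied by the construction of Hedden-Kim-Livingston used in Theorem \ref{main}. Starting from an HKL-style family $\{J_{0,k}\}$ provided by Theorem \ref{main}, we would inductively define $J_{i,k}$ by infecting $J_{i-1,k}$ along a curve $\eta$ whose class lies in the $i$-th derived subgroup of $\pi_1$ of an appropriately chosen infecting region, using a winding-number-zero pattern. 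Standard bipolar infection calculus would then guarantee $J_{i,k}\in \mathcal{T}_i$.

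Preserving order two along the tower is delicate because satellite operations do not distribute over connect sum. The proposal is to perform each infection $\mathbb{Z}_2$-equivariantly: if $J_{i-1,k}\#J_{i-1,k}$ bounds a smooth slice disk $\Delta$ invariant under the involution swapping the two summands, and the infection is carried out along a $\mathbb{Z}_2$-symmetric pair of curves disjoint from $\Delta$, then $J_{i,k}\#J_{i,k}$ inherits a slice disk. Verifying that the HKL seeds admit such equivariant slicings, and that sufficiently many derived-subgroup curves respecting the symmetry can be produced at each stage, is the first substantive step.

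The main obstacle will be detection modulo $\mathcal{T}_{i+1}$. The amenable $L^{(2)}$-$\rho$-invariants used in \cite{CHH13} and \cite{CK17} satisfy $\rho(K)=-\rho(-K)$, so they vanish identically on 2-torsion classes in $\mathcal{T}$ and cannot distinguish the $J_{i,k}$ directly. To circumvent this, we propose evaluating amenable $\rho$-invariants on the branched double cover $\Sigma_2(J_{i,k})$ rather than on the zero-surgery, using representations $\pi_1(\Sigma_2(J_{i,k}))\to\Gamma_i$ into amenable groups $\Gamma_i$ built from the derived series in a manner parallel to \cite{CK17}. Because $\Sigma_2(K)=\Sigma_2(-K)$ as oriented 3-manifolds, such invariants are insensitive to the $K\mapsto -K$ symmetry, yet they should still be sensitive to bipolar depth once one proves an appropriate branched-cover analogue of the Cha-Orr amenable signature theorem.

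The hardest step will be establishing this branched-cover amenable signature theorem, which must force the chosen $\rho$-invariants to vanish on knots in $\mathcal{T}_{i+1}$. The approach I envision is to lift the 4-manifold witnessing $\mathcal{T}_{i+1}$-membership to the double cover branched along the slice disk, verify that the homological surfaces $\{S_i\}$ from the definition of $(i+1)$-bipolar lift to a collection of surfaces whose fundamental groups remain in the $(i+1)$-st derived subgroup, and then apply the Cha-Orr vanishing theorem on the cover. Once such a vanishing result is in place, linear independence of the family $\{J_{i,k}\}_k$ modulo $\mathcal{T}_{i+1}$ should follow from the standard tower-of-infections separation argument, using the nontriviality of the base-level $d$-invariants or metabelian Casson-Gordon invariants of the HKL seeds as the source of $k$-dependence.
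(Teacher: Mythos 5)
First, be aware that this statement is posed as a \emph{Conjecture} in the paper and is not proved there; Theorem \ref{main} establishes only the case $i=0$. Your proposal therefore has to stand entirely on its own, and as written it is a research program rather than a proof: the two ingredients you yourself identify as "the first substantive step" and "the hardest step" are both left unestablished, and each conceals a genuine obstruction rather than a routine verification.

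The decisive gap is the hoped-for branched-cover amenable signature theorem. If $G=\pi_1(V\setminus\Delta)$ and $H\leq G$ is the index-two subgroup corresponding to the double cover, the best one can say in general is $G^{(n)}\subseteq H^{(n-1)}$ (since $G^{(1)}\subseteq H$), so the lifted surfaces $\{S_i\}$ are only guaranteed to sit one level \emph{shallower} in the derived series of the cover than they did downstairs. This loss of a derived-series level is exactly why Theorem \ref{d invariant and 1-bipolar knots} (Theorems 6.2 and 6.5 of \cite{CHH13}) is stated for $\mathcal{B}_1$ and outputs only $d$-invariant, i.e.\ level-zero, conclusions about the branched cover; your sketch gives no mechanism for recovering higher-order vanishing on $\Sigma_2$ from $(i+1)$-bipolarity, and no such theorem is currently known. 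The second gap is order two: you posit a $\mathbb{Z}_2$-equivariant slice disk for $J_{i-1,k}\#J_{i-1,k}$ compatible with the infection curves, but nothing in the Hedden--Kim--Livingston construction supplies one -- their knots are of order two because they are negatively amphichiral and topologically slice -- and preserving negative amphichirality together with topological sliceness and $i$-bipolarity through an iterated winding-number-zero infection tower is precisely the difficulty the paper flags (Cochran--Harvey--Horn obtain amphichiral $n$-bipolar knots only for $n\neq 1$, and not topologically slice ones). Until both of these are actually proved, the argument does not close.
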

In fact, if one does not restrict the attention to topologically slice knots, Cochran, Harvey and Horn already showed the existence of infinitely many linearly independent amphichiral $n$-bipolar knots for $n\neq 1$ (cf.\ Theorem 7.1 of \cite{CHH13}).

The organization of the rest of paper goes as follows. In Section 2 we recall the construction of the knots used in \cite{HKL16}, and show that a proper collection of the knots is 0-bipolar. In Section 3 we briefly recall necessary facts from Heegaard Floer homology as a preparation for the proving the main theorem. In Section 4 we give a proof for Theorem \ref{main}.

\section*{Acknowledgment} I am grateful for Matt Hedden for his ongoing support and interesting conversations that led to this question. This work also benefited greatly from Min Hoon Kim's extremely detailed and valuable feedback. I also thank Chuck Livingston for his interest and useful comments. 
\begin{figure}[!ht] \label{fig1}
\centering{
\resizebox{60mm}{!}{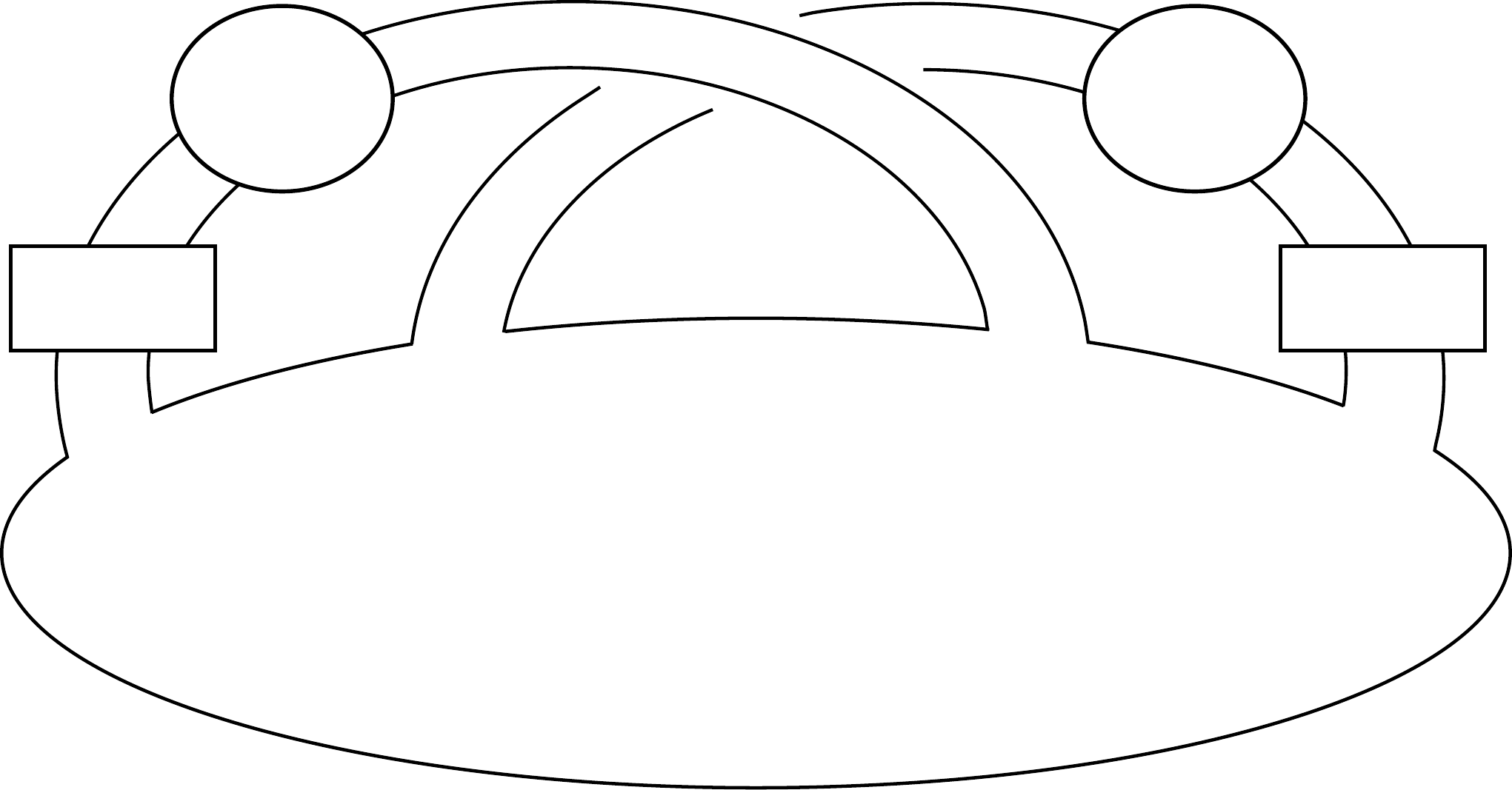}
\caption{$K_{J,n}$. Here the box with integer $n$ (resp.\ $-n$) stands for $n$ full right-handed (resp.\ left-handed) twists}
}
\end{figure}
\section{The knots $K_{n,k}$}
We begin by recalling the construction of the amphichiral knots $K_{n,k}$ considered in \cite{HKL16}. First, given any knot $J$ and an integer $n$, one can construct $K_{J,n}$ by infection and twisting as in Figure \ref{fig1}. One then define $K_{n,k}$ to be $K_{D_k,n}\#K_{U,n}$.
Here, $D_k$ refers to the connected sum of $k$ copies of $D=Wh^+(T_{2,3})$, the untwisted positive whitehead double of the right-hand trefoil, and $U$ denotes the unknot.

Note $K_{J,n}$ is negatively amphicheiral since it is the boundary of a surface obtained by plumbing two bands, with one band being the mirror of the other one. Since $D_k$ is topologically slice \cite{Fre82}, $K_{D_k,n}$ is topologically concordant to $K_{U,n}=-K_{U,n}$, implying $K_{D_k,n}\#K_{U,n}$ is topologically slice. In summary, $K_{n,k}$ is a topologically slice negative amphicheiral knot. \\

The rest of this section is devoted to proving the next proposition,  which shows $K_{n,k}$ are $0$-bipolar when $n$ and $k$ are properly chosen.
\begin{prop}\label{K_n,k are bipolar}
If $n\geq 4k$, then $K_{n,k} \in \mathcal{T}_0$.
\end{prop}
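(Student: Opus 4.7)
Because $K_{n,k}$ is a connected sum of two negatively amphichiral knots, $K_{n,k}$ is itself negatively amphichiral and hence isotopic in $S^{3}$ to its mirror-reverse $-K_{n,k}$. The plan is to use this symmetry to reduce $0$-bipolarity to just the $0$-positive statement: once a simply-connected positive-definite $4$-manifold $V$ bounded by $S^{3}$ is produced in which $K_{n,k}$ bounds a smoothly embedded disk $\Delta$ with a basis of $H_{2}(V)$ represented by surfaces disjoint from $\Delta$, the mirror $\overline{V}$ will be the required negative-definite witness since $-K_{n,k}\simeq K_{n,k}$ bounds the mirror disk $\overline{\Delta}\subset \overline{V}$.

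To build $V$, my plan is to produce a smooth surface $\Sigma'\subset B^{4}$ bounded by $K_{n,k}$ and then cap it off into a disk by handle attachments of a single sign. The topological sliceness of $D_{k}$ upgrades, smoothly, to a genus-$2k$ cobordism in $S^{3}\times I$ from $K_{D_k,n}$ to $K_{U,n}$, with $k$ units of genus at each of the two $\pm D_{k}$-infection sites coming from $g_{4}(D)=1$ per copy of $D=Wh^{+}(T_{2,3})$. Concatenating this cobordism with the standard slice disk for $K_{U,n}\,\#\,-K_{U,n}$ (which exists because $K_{U,n}\simeq -K_{U,n}$ by its own negative amphichirality) yields a smooth genus-$2k$ surface $\Sigma'\subset B^{4}$ with $\partial\Sigma'=K_{n,k}$. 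I would then cap $\Sigma'$ into a disk by attaching $2k$ two-handles along a system of curves on $\Sigma'$ realizing a half-basis of $H_{1}(\Sigma')$, each with framing $+1$, so that the resulting $V$ has intersection form $2k\langle+1\rangle$ and $H_{2}(V)$ is generated by $+1$-self-intersection spheres that can be isotoped off $\Delta$.

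The crux, and the main obstacle I anticipate, is the sign of the framings on the capping handles. The half-basis curves coming from the $-D_{k}$-infection carry a natural framing opposite in sign to those coming from the $+D_{k}$-infection, so without intervention half of the caps would have to be $-1$-framed and the intersection form of $V$ would be indefinite. One can reverse the framing sign of each offending cap by isotoping its attaching curve once through the $\pm n$-twist box, at a fixed cost per copy of $D$; a bookkeeping of that cost (I expect four twists per copy, reflecting the four crossings needed to unknot the clasp in the Whitehead pattern together with its mirror image) against the $k$ offending caps from the $-D_{k}$-infection side produces the threshold $n\geq 4k$. The bulk of the actual proof should then be a careful Kirby-calculus verification that after these framing swaps $V$ is simply connected, has intersection form $2k\langle+1\rangle$, and admits a basis of $H_{2}(V)$ by spheres disjoint from $\Delta$.
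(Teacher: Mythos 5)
Your opening reduction---using negative amphichirality to pass from $0$-bipolarity to a single sign---is exactly the paper's first step, and you have correctly located the crux: the two infection sites carry mirror-image copies of $D_k$, so the crossing changes (equivalently, the compressing curves of your genus-$2k$ surface) at the two sites demand blow-ups of opposite signs. The gap is that your proposed remedy does not exist. Isotoping an attaching circle through the $\pm n$-twist box is an isotopy in $S^3$ and cannot alter the framing of a $2$-handle; more to the point, the sign of the $\pm1$-framed unknot whose blow-down realizes a given crossing change is dictated by the local sign of that crossing, and every crossing you would need to change on the left-hand band---the clasps of $-D_k$ and the crossings induced by the $-n$ twists alike---has the wrong sign. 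No bookkeeping against the twist box converts those caps to $+1$-framed ones, so the $4$-manifold your construction actually produces has intersection form $k\langle+1\rangle\oplus k\langle-1\rangle$, which is useless here.

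The paper's proof avoids the problem by never touching the left-hand band. Since $K_{D_k,n}$ bounds a plumbing of two bands, it suffices to trivialize one of them, and on the right-hand band every required change has the same sign: $k$ blow-downs of $+1$-framed unknots undo the clasps of the $k$ copies of $D=Wh^+(T_{2,3})$ (each costing $4$ units of band twisting, which is exactly where the hypothesis $n\geq 4k$ enters), and $n-4k$ further blow-downs of the same sign remove the residual right-hand twists, unknotting the result. All $n-3k$ blow-down curves have linking number $0$ with the knot, so $K_{D_k,n}$ bounds a null-homologous disk in a punctured connected sum of $n-3k$ copies of $\overline{\mathbb{CP}}^2$ and lies in $\mathcal{N}_0$; the same one-band, one-sign argument handles $K_{U,n}$, and amphichirality of each summand separately (not just of $K_{n,k}$) upgrades $\mathcal{N}_0$ to $\mathcal{B}_0$. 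If you want to keep your surface-capping language, the correct fix is the same: only compress the genus supported on the right-hand band, rather than a half-basis spread over both infection sites.
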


\begin{figure}[!ht]\label{fig2}
\centering{
\resizebox{116mm}{!}{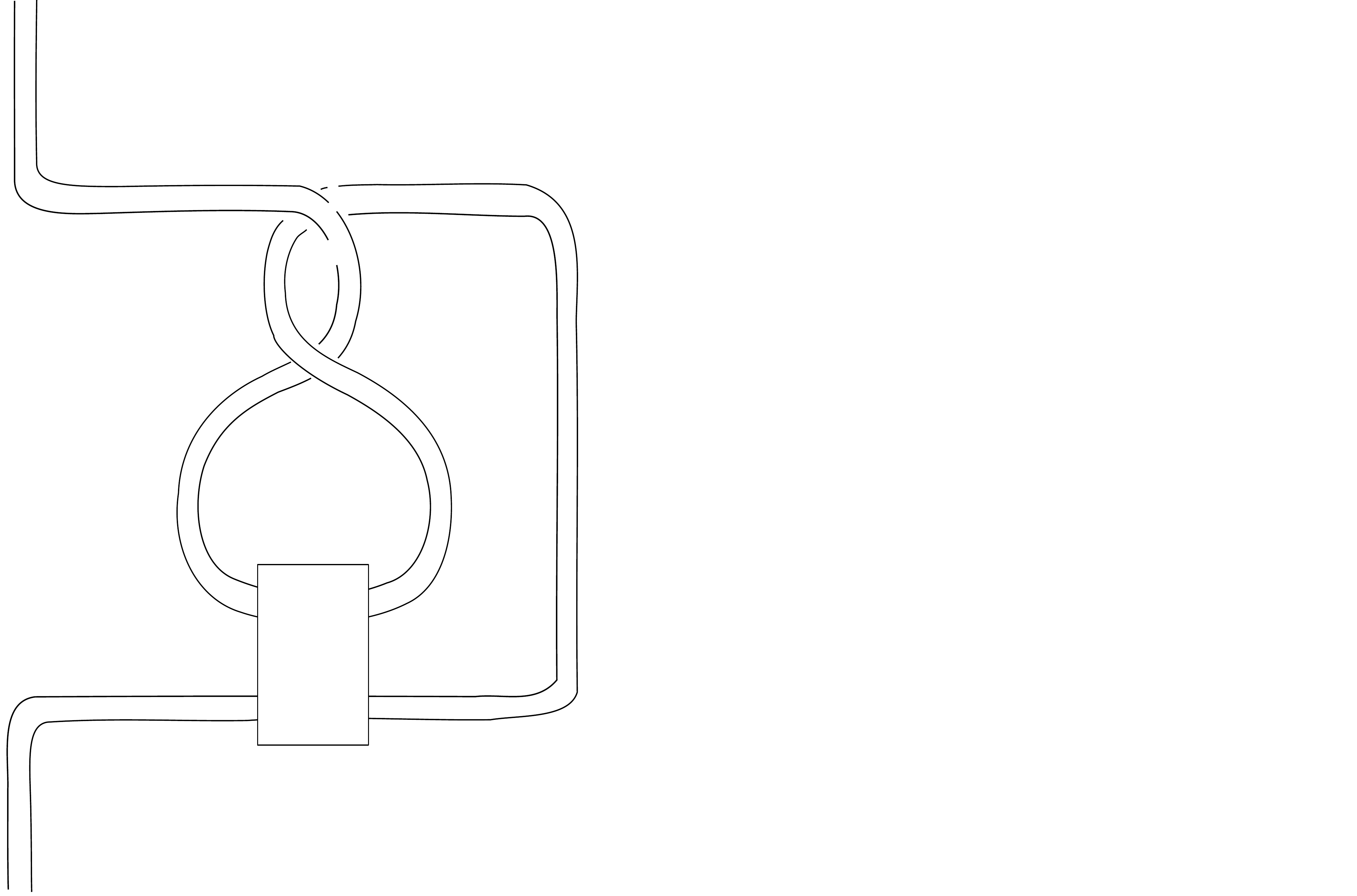}
\caption{The blow-down operation used to untie a band infected by $D$: note the writhe of the above diagram for $D$ is $2$, so we introduced $-2$ full twists to balance that.}
}
\end{figure}

Before giving the proof, we recall some standard notations for convenience. The monoid of concordance classes of $n$-positive (resp.\ $n$-negative) knots is usually denoted by $\mathcal{P}_n$ (resp.\ $\mathcal{N}_n$), and we denote the group of concordance classes of $n$-bipolar knots by $\mathcal{B}_n$. 
\begin{proof}
Note the knots $K_{n,k}$ are topologically slice, so it suffices to see both $K_{D_k,n}$ and $K_{U,n}$ are in $\mathcal{B}_0$. Moreover, since both $K_{D_k,n}$ and $K_{U,n}$  are amphicheiral it suffices to show both of them are in $\mathcal{N}_0$. For $K_{U,n}$, one can see it lies in $\mathcal{N}_0$ since it can be unknotted by changing negative crossings. To prove $K_{D_k,n}\in \mathcal{N}_0$, note $D=Wh^+(T_{2,3})$ can be unknotted by changing a single positive crossing. As in \cite{CH15}, we can untie the band infected by $D$ by blowing down a $+1$-framed unknot around the corresponding crossing. See Figure \ref{fig2}. However, other than changing the crossing, this operation also produces band twisting. Using Figure \ref{fig2} as a local model, it is not hard to see that after carrying out this blow-down operation $k$-times, one for each copy of $D$ on the right-hand band in Figure \ref{fig1}, the resulting knot is $J_{k,n}$ as shown in Figure 3. Note the right-hand band in Figure \ref{fig3} has $(n-4k)$ full twists, inducing $2(n-4k)$ negative crossings when $n\geq 4k$. We can unknot $J_{k,n}$ by changing $(n-4k)$ of these negative crossings to positive, which can be achieved by the typical way of blowing down $+1$-framed unknots around the crossings. In summary, $K_{D_k,n}$ can be unknotted by blowing down $(n-3k)$ copies of $+1$-framed unknots, all of which have linking number $0$ with $K_{D_k,n}$. Hence $K_{D_k,n}$ bounds a disk $\Delta$ in $W$ where $W$ is punctured $\overline{\mathbb{CP}}^{n-3k}$ such that $[\Delta,\partial \Delta]=0\in H_2(W,\partial W)$. Therefore, $K_{D_k,n}\in \mathcal{N}_0$.
\end{proof}
\begin{figure}[!ht]\label{fig3}
\centering{
\resizebox{60mm}{!}{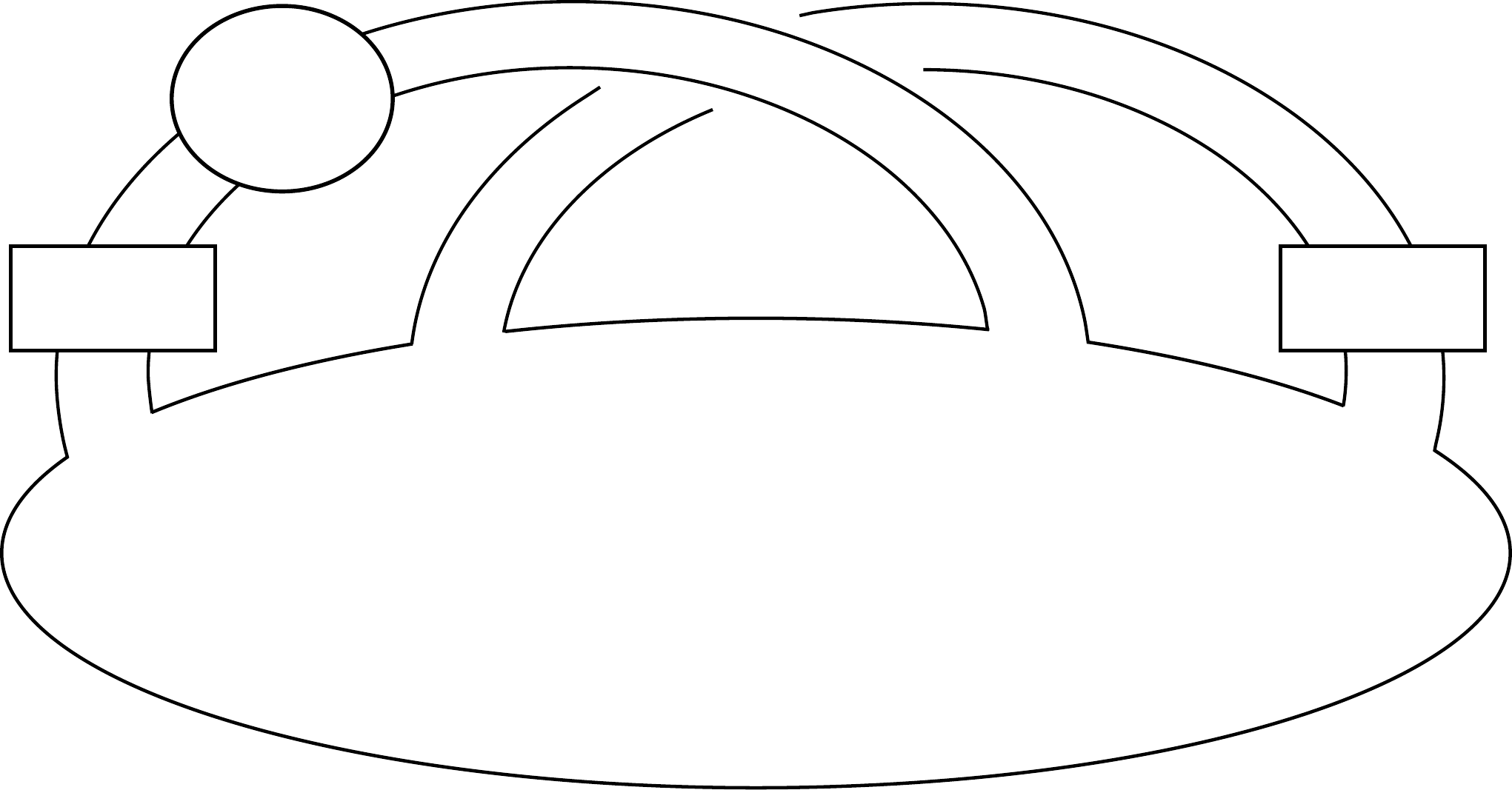}
\caption{$J_{k,n}$}
\label{fig3}
}
\end{figure}
\section{Obstruction from Heegaard floer homology}
\subsection{$d$-invariant and bipolar filtration}
The obstruction we use to see the knots $K_{n,k}$ not lying in $\mathcal{T}_1$ is provided by the \textit{correction term} derived from Heegaard Floer homology, which is often called the $d$-invariant as well. Introduced by Oszv\'{a}th and Szab\'{o}, the correction term $d(Y,\mathfrak{s})$ is a rational number associated to a rational homology sphere $Y$ equiped with a $Spin^c$-structure $\mathfrak{s}$. A precise definition for $d$-invariant will not be needed for our purpose, so we refer the interested reader to \cite{OS03} for a more detailed exposition. Instead, we will gather a few facts below with an eye towards our application. \\

The first thing we recall is the additivity of the $d$-invariant.
\begin{thm}[\cite{OS03}]
$d(Y_1\#Y_2,\mathfrak{s}_1\#\mathfrak{s}_2)=d(Y_1,\mathfrak{s}_1)+d(Y_2,\mathfrak{s}_2)$ .
\end{thm}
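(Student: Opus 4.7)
The plan is to reduce additivity to a Künneth-type calculation in Heegaard Floer homology. Recall that $d(Y,\mathfrak{s})$ is the minimum $\mathbb{Q}$-grading of a non-torsion element of $HF^+(Y,\mathfrak{s})$, equivalently the grading of the lowest element in the image of $\pi\colon HF^{\infty}(Y,\mathfrak{s}) \to HF^{+}(Y,\mathfrak{s})$. For a rational homology sphere, $HF^{\infty}(Y,\mathfrak{s}) \cong \mathbb{F}[U,U^{-1}]$ as an absolutely graded $\mathbb{F}[U]$-module, with the generator sitting in grading $d(Y,\mathfrak{s})$.

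First I would invoke the standard Künneth-type formula of Ozsv\'ath--Szab\'o for Heegaard Floer homology of connected sums. For rational homology spheres it yields a graded isomorphism
$$HF^{\infty}(Y_1 \# Y_2, \mathfrak{s}_1 \# \mathfrak{s}_2) \;\cong\; HF^{\infty}(Y_1,\mathfrak{s}_1) \otimes_{\mathbb{F}[U,U^{-1}]} HF^{\infty}(Y_2,\mathfrak{s}_2),$$
compatible with the natural maps $\pi$ to $HF^{+}$. The right-hand side is again a single $U$-tower whose generator lies in grading $d(Y_1,\mathfrak{s}_1) + d(Y_2,\mathfrak{s}_2)$. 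Pulling this back through the isomorphism identifies the lowest grading in $\mathrm{Im}(\pi)$ for $Y_1 \# Y_2$ with that sum, giving the claimed additivity.

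An alternative I would keep in mind is a cobordism-theoretic argument. Let $W$ be the standard $1$-handle cobordism from $Y_1 \sqcup Y_2$ to $Y_1 \# Y_2$; its intersection form is trivial, so $b_2^{\pm}(W) = 0$, and the same holds for the turn-around $\overline{W}$. Applying the Ozsv\'ath--Szab\'o $d$-invariant inequality for negative semi-definite cobordisms in both directions (with the $\mathrm{Spin}^c$ structure $\mathfrak{s}_1 \# \mathfrak{s}_2$ extending to $\mathfrak{s}_1 \sqcup \mathfrak{s}_2$) pins the grading shift to zero and forces equality, once one adopts the convention $d(Y_1 \sqcup Y_2, \mathfrak{s}_1 \sqcup \mathfrak{s}_2) := d(Y_1,\mathfrak{s}_1) + d(Y_2,\mathfrak{s}_2)$ that is itself built into the Künneth formula.

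The main technical nuisance either way is book-keeping for the absolute $\mathbb{Q}$-grading: one must verify that the Künneth isomorphism preserves the absolute $\mathbb{Q}$-grading rather than only the relative $\mathbb{Z}/2$-grading, and that any Tor contributions arising when passing from $HF^{\infty}$ to $HF^{+}$ lie strictly above the minimum-grading image class used to read off $d$, hence do not interfere with the computation. Everything else is essentially mechanical.
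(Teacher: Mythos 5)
This theorem is imported, not proved, in the paper: the author simply cites \cite{OS03} (it is Theorem~4.3 there), so there is no in-paper argument to compare against. Measured against the actual Ozsv\'ath--Szab\'o proof, your first sketch is essentially the right idea but is organized in a way that creates the bookkeeping headaches you yourself flag at the end. The clean version runs as follows: the connected-sum (K\"unneth) formula holds at the chain level for $CF^{\leq 0}$ (equivalently $CF^-$), giving a degree-preserving pairing
$HF^{\leq 0}(Y_1,\mathfrak{s}_1)\otimes_{\mathbb{Z}[U]}HF^{\leq 0}(Y_2,\mathfrak{s}_2)\to HF^{\leq 0}(Y_1\# Y_2,\mathfrak{s}_1\#\mathfrak{s}_2)$
that is an isomorphism after inverting $U$; this immediately yields the single inequality $d(Y_1\# Y_2,\mathfrak{s}_1\#\mathfrak{s}_2)\geq d(Y_1,\mathfrak{s}_1)+d(Y_2,\mathfrak{s}_2)$, and the reverse inequality is then obtained for free by applying the same inequality to the orientation reversals together with the duality $d(-Y,\mathfrak{s})=-d(Y,\mathfrak{s})$. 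This two-inequality-plus-duality structure is exactly what lets one sidestep the issues you raise about Tor terms and about whether the isomorphism is compatible with $\pi\colon HF^\infty\to HF^+$ --- you never need a full graded identification of $HF^+$ of the connected sum. (Over $\mathbb{F}[U]$ one can also argue directly that Tor of $U$-torsion modules stays $U$-torsion, so the $U$-tower of the sum is the tensor of the towers, but that is more work than the duality trick.) Your second, cobordism-theoretic alternative is weaker: the negative-semidefinite inequality as stated in \cite{OS03} is for manifolds bounding a single rational homology sphere, and extending it to the $1$-handle cobordism from $Y_1\sqcup Y_2$ requires a disjoint-union convention whose justification is again the K\"unneth formula, so, as you concede, it is not an independent proof. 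With the duality step made explicit, your first approach is a faithful reconstruction of the cited proof.
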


Secondly we recall a result which relates correction terms and the bipolar filtration. Given a knot $K\subset S^3$, denote by $M(K)$ the two-fold branched cover of $S^3$ along $K$. Note that $M(K)$ is a $\mathbb{Z}_2$-homology sphere and therefore admits a unique spin structure. This allows us to describe the set $\text{spin}^c$ structures on $M(K)$ conveniently: denote by $\mathfrak{s}_0$ the $\text{spin}^c$ structure induced by the unique spin structure, and hence in view of transitivity of the action of $H^2(Y)$ on $\text{spin}^c(M(K))$, any $\text{spin}^c$ structure on $M(K)$ can be denoted as $\mathfrak{s}_0+PD[x]$ for some $x\in H_1(M(K))$, where $PD[\cdot]$ denotes Poincar\'{e} duality. We sometimes abbreviate  $\mathfrak{s}_0+PD[x]$ to $\mathfrak{s}_x$. With these conventions in mind, we have the following theorem, a more general form of which can be found in \cite{CHH13}.
 
\begin{thm}[Theorem 6.2 and Theorem 6.5 of \cite{CHH13}]\label{d invariant and 1-bipolar knots}
Assume $K\in \mathcal{B}_1$ and let $M(K)$ be the two-fold branched cover of $S^3$ along $K$, then there exist metabolizers $G_i<H_1(M(K))$, $i=1,2$ for the $\mathbb{Q}/\mathbb{Z}$-valued linking form on $H_1(Y)$ such that for all $z_i\in G_i$, $i=1,2$ we have
$$
 \begin{aligned}
 d(M(K),\mathfrak{s}_0+PD[z_1])\leq 0,\\
 d(M(K),\mathfrak{s}_0+PD[z_2])\geq 0,
 \end{aligned}
 $$
where $\mathfrak{s}_0$ is the unique $\text{spin}^c$ structure on $Y$ induced by the spin structure. 
\end{thm}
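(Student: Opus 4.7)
The plan is to produce $G_1$ and $G_2$ separately from the $1$-positive and $1$-negative witnesses for $K$ via the double branched cover construction, and to extract the signs of the $d$-invariants from the definiteness of the resulting $4$-manifolds. I sketch the $G_1$ case (giving $d\le 0$); $G_2$ follows by the mirror argument.

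Let $(V^+,\Delta^+)$ be a $1$-positive pair for $K$, with simply connected, positive definite $V^+$ and surfaces $S_i^+\subset V^+\setminus\Delta^+$ forming a basis for $H_2(V^+)$ satisfying $\pi_1(S_i^+)\subset\pi_1(V^+\setminus\Delta^+)^{(1)}$. Let $\pi\colon W^+\to V^+$ be the double cover branched along $\Delta^+$, so $\partial W^+=M(K)$. The first topological step exploits the $\pi_1$-condition: since $\pi_1(V^+\setminus\Delta^+)\cong\mathbb{Z}$ is generated by a meridian of $\Delta^+$, the hypothesis forces every loop on $S_i^+$ to have linking number zero with $\Delta^+$, so each $S_i^+$ lifts to two disjoint copies $\widetilde S_i^{\,\pm}$ in $W^+$. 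A Mayer--Vietoris and Euler-characteristic computation then yields $\pi_1(W^+)=1$, $b_2(W^+)=2\,b_2(V^+)$, and an intersection form isomorphic to two orthogonal copies of that of $V^+$; in particular $W^+$ is positive definite, and by Donaldson's diagonalization theorem its intersection form on the free part of $H_2(W^+)$ is the standard diagonal $\langle 1\rangle^{\oplus 2b_2(V^+)}$.

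Next I extract the metabolizer and $\text{spin}^c$ data. From the long exact sequence of $(W^+,M(K))$ with $H_1(W^+;\mathbb{Z})=0$, the map $H_2(W^+,M(K))\to H_1(M(K))$ is surjective, and $G_1:=\ker\bigl(H_1(M(K))\to H_1(W^+;\mathbb{Z})\bigr)$ is a metabolizer for the $\mathbb{Q}/\mathbb{Z}$-valued linking form via a standard Poincar\'{e}--Lefschetz duality argument in the Casson--Gordon tradition. Dually, $PD[z_1]\in H^2(M(K))$ for $z_1\in G_1$ is exactly a class that lifts to $H^2(W^+)$, so $\mathfrak{s}_0+PD[z_1]$ extends to some $\text{spin}^c$ structure $\mathfrak{t}$ on $W^+$. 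Invoking the Ozsv\'{a}th--Szab\'{o} $d$-invariant inequality for positive definite bounding $4$-manifolds yields
\[
d(M(K),\mathfrak{s}_0+PD[z_1])\;\le\;\tfrac{1}{4}\bigl(c_1(\mathfrak{t})^2-b_2(W^+)\bigr),
\]
and the diagonal structure lets me shift $\mathfrak{t}$ inside its coset of extensions (a torsor over $2H^2(W^+,M(K))$) to arrange $c_1(\mathfrak{t})=(\pm 1,\dots,\pm 1)$ in Donaldson's basis, so that $c_1(\mathfrak{t})^2=b_2(W^+)$ and hence $d\le 0$. The mirror argument on a $1$-negative witness $(V^-,\Delta^-)$, using the negative definite version of the OS inequality and the mirror Donaldson bound, produces $G_2$ with $d\ge 0$.

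I expect the final step to be the main obstacle: matching the coset of extensions of $\mathfrak{s}_0+PD[z_1]$ against Donaldson's characteristic vectors of extremal square requires verifying that a shift by some $2\alpha$ with $\alpha\in H^2(W^+,M(K))$ realizes the desired $c_1$-class without altering the restriction to $M(K)$. The topological reduction (lifting of the $S_i^+$ and the intersection form calculation) and the extraction of the metabolizer $G_1$ are comparatively routine once the $\pi_1$-condition is in hand.
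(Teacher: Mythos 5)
First, a point of reference: the paper does not prove this statement at all --- it is imported, with attribution, from Theorems 6.2 and 6.5 of \cite{CHH13} --- so your proposal can only be compared with the argument there. Your overall architecture is the right one and does match it: pass to the double branched cover $W^{\pm}$ of the definite witness $V^{\pm}$ along the disk, use the hypothesis $\pi_1(S_i)\subset\pi_1(V\setminus\Delta)^{(1)}$ to lift the surfaces (what is actually used is only that loops on $S_i$ vanish in $H_1(V\setminus\Delta)\cong\mathbb{Z}$, i.e.\ have zero linking with $\Delta$; $\pi_1(V\setminus\Delta)$ itself is only normally generated by the meridian, not infinite cyclic), take $G$ to be $\ker\bigl(H_1(M(K))\to H_1(W^{\pm})\bigr)$, and feed an extension with extremal $c_1$ into the Ozsv\'ath--Szab\'o correction-term inequality.

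That said, two intermediate claims are wrong, and one of them contradicts the rest of your own argument. (1) $\pi_1(W^+)=1$ is false in general: already for $V=D^4$ and $\Delta$ a slice disk for $6_1$, the branched double cover is a rational homology ball with $H_1\cong\mathbb{Z}_3$. In fact $H_1(W^+)$ \emph{must} be nontrivial whenever $H_1(M(K))\neq 0$, since otherwise your $G_1=\ker\bigl(H_1(M(K))\to H_1(W^+)\bigr)$ would be all of $H_1(M(K))$ and could not be a metabolizer; what the argument actually needs is only $b_1(W^+)=0$ and control of the free part of $H_2(W^+)$. (2) You cannot apply Donaldson's theorem to $W^+$ itself: there is no diagonalization theorem for definite $4$-manifolds with rational homology sphere boundary (the $E_8$-manifold bounding the Poincar\'e sphere is the standard counterexample), and indeed if the form on $H_2(W^+)/\mathrm{tors}$ were unimodular diagonal \emph{and} $H_1(W^+)=0$ then $M(K)$ would be a homology sphere. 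The correct route is to apply Donaldson to the closed manifold $V^+\cup D^4$ to diagonalize $Q_{V^+}$, and then prove --- this is the substantive lemma you only gesture at --- that the lifts $\widetilde S_i^{\,\pm}$ generate $H_2(W^+)$ modulo torsion, so that the form on the free part is the unimodular lattice $Q_{V^+}\oplus Q_{V^+}\cong\langle 1\rangle^{\oplus 2b_2(V^+)}$ while $H_1(W^+)$ is a finite group absorbing the determinant $|H_1(M(K))|$. Once that is in place, the step you flag as the main obstacle does close: unimodularity on the free part makes $H^2(W^+,M(K))\to H^2(W^+)$ surjective modulo torsion, so the coset of extensions of a fixed boundary $\text{spin}^c$ structure realizes every all-odd vector, in particular $(\pm1,\dots,\pm1)$; you would still owe the check that the set of extendable $\text{spin}^c$ structures is exactly $\{\mathfrak{s}_0+PD[z]: z\in G_1\}$ with basepoint $\mathfrak{s}_0$ itself, which uses the oddness of $|H_1(M(K))|$ and conjugation symmetry.
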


\subsection{$d$-invariants of two-fold branched cover along $K_{n,k}$}
 The proof of the main theorem will be achieved by applying Theorem \ref{d invariant and 1-bipolar knots} to $M(K_{k,n})=M(K_{D_k,n})\#M(K_{U,n})$. Therefore, in this subsection we will gather some facts about $d$-invariants of $M(K_{k,n})$ from \cite{HKL16}. To begin with, we need a description for the first homology group of $M(K_{k,n})$, as well as a compatible description for the metabolizers of the linking form. 

\begin{figure}[!ht]\label{fig4}
\centering{
\resizebox{95mm}{!}{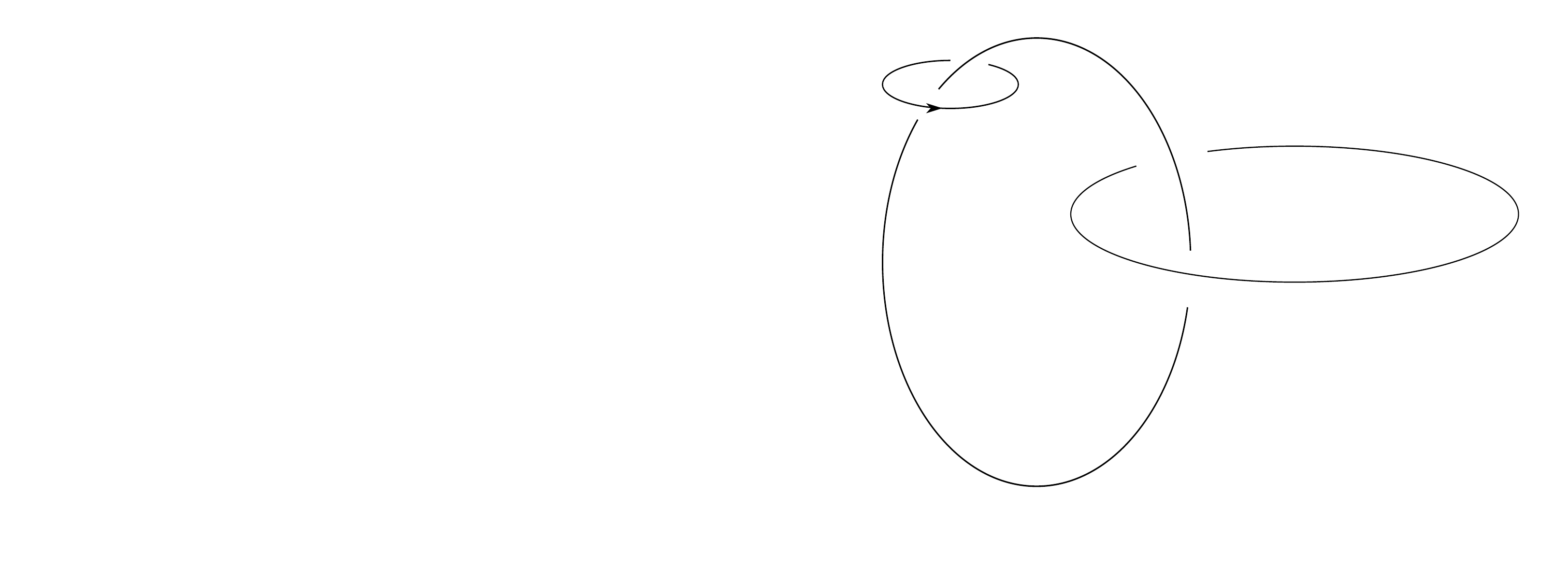}
\caption{Surgery description for $M(K_{D_k,n})$ (left) and $M(K_{U,n})$ (right): the superscript $r$ stands for orientation reversal of the corresponding knot.}
}
\end{figure}

Implementing an algorithm in \cite{AK79}, one can obtain surgery descriptions for $M(K_{D_k,n})$ and $M(K_{U,n})$ as illustrated in Figure 4. The first homology group of both 3-manifolds can be seen to be isomorphic to the cyclic group $\mathbb{Z}_{4n^2+1}$ by a standard computation. In each figure, a meridian curve $\mu$ is labeled in order to serve as a prefered generator of the first homology group of the corresponding 3-manifold.  Summarizing this we have the following.
\begin{prop}[Proposition 2.1 of \cite{HKL16}]
The surgery description in Figure 4 provides an isomorphism $H_1(M(K_{D_k,n}))\cong H_1(M(K_{U,n}))\cong \mathbb{Z}_{4n^2+1}\langle \mu \rangle$.
\end{prop}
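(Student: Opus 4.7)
The plan is to compute $H_1$ directly from the surgery presentations in Figure 4 using the standard presentation of $H_1$ of a surgered $3$-manifold: if $Y$ is obtained by Dehn surgery on a framed link $L=L_1\cup\cdots\cup L_m$ in $S^3$ with linking matrix $A$, then $H_1(Y)\cong\mathbb{Z}^m/A\mathbb{Z}^m$, with generators the meridians of the components.

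I would first handle the simpler case $M(K_{U,n})$. The right-hand diagram of Figure 4 is a two-component framed link. From the picture it is a Hopf link with framings $-2n$ and $2n$, so the linking matrix is
\[
A \;=\; \begin{pmatrix} -2n & 1 \\ 1 & 2n \end{pmatrix},
\]
which has determinant $-(4n^{2}+1)$. Elementary row and column operations bring $A$ to $\mathrm{diag}(1,\,4n^2+1)$, so $H_1(M(K_{U,n}))\cong \mathbb{Z}_{4n^2+1}$. Tracing through these operations, the class of the marked meridian $\mu$ is a generator, giving the isomorphism $H_1(M(K_{U,n}))\cong \mathbb{Z}_{4n^2+1}\langle\mu\rangle$.

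Next, for $M(K_{D_k,n})$ I would argue that adjoining the infections by $\pm D_k$ and $\pm D_k^{r}$ does not affect $H_1$ or the chosen generator. The left-hand diagram of Figure 4 is obtained from the right-hand one by replacing four unknotted infection circles in the surgery diagram by the appropriate knots $\pm D_k$, $\pm D_k^{r}$. These infection circles are null-homologous in the complement of the rest of the surgery link (they are band-encircling meridians lifted to the double branched cover via the Akbulut--Kirby algorithm \cite{AK79}), and a knot infection along a null-homologous curve does not change $H_1$: a Mayer--Vietoris computation comparing $S^3\setminus\nu(\eta)$ to the complement of the infection knot $J$ uses only that both $J$ and the unknot have meridional $H_1=\mathbb{Z}$. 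Consequently the linking matrix of the enlarged diagram block-decomposes, with the original $2\times 2$ block from the Hopf link intact and the remaining blocks contributing trivial cokernel, giving $H_1(M(K_{D_k,n}))\cong H_1(M(K_{U,n}))\cong\mathbb{Z}_{4n^2+1}\langle\mu\rangle$. As a consistency check, since $D=\mathrm{Wh}^{+}(T_{2,3})$ has Alexander polynomial $1$, one has $\Delta_{K_{D_k,n}}(t)=\Delta_{K_{U,n}}(t)$, confirming $|H_1(M(K_{D_k,n}))|=|H_1(M(K_{U,n}))|=4n^2+1$.

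The main obstacle is the bookkeeping in the second step: making precise which curves in the left-hand Figure 4 are the null-homologous infection circles and verifying that the full linking matrix indeed block-decomposes so that the additional summand contributes trivially. This is essentially a careful unwinding of the Akbulut--Kirby surgery-description algorithm applied to the infection picture of $K_{D_k,n}$; once set up, the homology computation is purely a determinant calculation.
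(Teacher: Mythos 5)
Your computation is correct and is exactly the ``standard computation'' the paper alludes to (the paper itself only cites Proposition 2.1 of \cite{HKL16} and does not spell out a proof): the linking matrix $\left(\begin{smallmatrix}-2n & 1\\ 1 & 2n\end{smallmatrix}\right)$ has determinant $-(4n^2+1)$ and coprime entries, so the cokernel is cyclic generated by $\mu$, and tying the surgery components into $\pm D_k$, $\pm D_k^r$ leaves the linking matrix, hence $H_1$, unchanged. The only cosmetic inaccuracy is your description of the left-hand diagram as having four extra unknotted infection circles replaced by knots --- in Figure 4 the knots appear as boxes through which the two surgery components pass --- but either reading yields the same linking matrix and the same conclusion.
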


The metabolizers for the linking form on $H_1(M(K_{n,k}))$ has the following description.
\begin{prop}[Lemma 3.5 of \cite{HKL16}]\label{metabolizer}
With the isomorphism $H_1(M(K_{n,k}))\cong \mathbb{Z}_{4n^2+1}\oplus\mathbb{Z}_{4n^2+1}$ given by the previous proposition, if $4n^2+1$ is square free, then each metabolizer for the linking form is generated by $(1,b)$ for some $b$ such that $b^2+1\equiv 0 \	(\text{mod}\ 4n^2+1)$.
\end{prop}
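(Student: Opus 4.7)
The plan is to reduce the statement to classifying isotropic order-$N$ subgroups of $(\mathbb{Z}_N)^2$ for the explicit linking form coming from Figure 4, where $N = 4n^2+1$. First, each of $M(K_{D_k,n})$ and $M(K_{U,n})$ is presented there as Dehn surgery on a two-component link whose framing/linking matrix is
$$
L \;=\; \begin{pmatrix} -2n & 1 \\ 1 & 2n \end{pmatrix},
$$
the off-diagonal entry being unaffected by the infections $\pm D_k$, $\pm D_k^r$ since they are supported in balls in the surgery-link exterior. Inverting $L$ modulo $\mathbb{Z}$ and reading off the entry corresponding to the distinguished meridian $\mu$ gives $\lambda(\mu,\mu) \equiv -\tfrac{2n}{N} \pmod{\mathbb{Z}}$; in particular the linking forms on the two summands are identical (not opposite).

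Since the linking form on a connected sum is the orthogonal direct sum, under the identification $H_1(M(K_{n,k})) \cong \mathbb{Z}_N \oplus \mathbb{Z}_N$ we get
$$
\lambda\bigl((a,b),(a,b)\bigr) \;\equiv\; -\frac{2n(a^2+b^2)}{N} \pmod{\mathbb{Z}}.
$$
From $4n^2+1 \equiv 1 \pmod{2n}$ we have $\gcd(2n,N)=1$, so $2n$ is a unit in $\mathbb{Z}_N$ and the self-isotropy condition on $(a,b)$ becomes simply $a^2+b^2 \equiv 0 \pmod{N}$.

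It remains to classify order-$N$ subgroups $G \le (\mathbb{Z}_N)^2$ satisfying this condition. Writing $N = p_1\cdots p_r$ with distinct primes (using the square-free hypothesis), the Chinese Remainder Theorem gives an isometric decomposition $(\mathbb{Z}_N)^2 \cong \bigoplus_i (\mathbb{F}_{p_i})^2$, and any order-$N$ subgroup corresponds to a choice of a line $\ell_i \le (\mathbb{F}_{p_i})^2$ for each $i$. If $\ell_i$ is spanned by $(a_i,b_i)$ with $a_i = 0$, then $b_i^2 \equiv 0 \pmod{p_i}$ forces $b_i=0$, contradicting $\ell_i \neq 0$. Hence $a_i \neq 0$ for every $i$, so by CRT $a \in (\mathbb{Z}_N)^\times$. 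Rescaling the generator by $a^{-1}$ produces a generator of the form $(1,b)$ with $b^2+1 \equiv 0 \pmod{N}$; conversely, any such $(1,b)$ has order $N$ and spans an isotropic cyclic subgroup, so it does give a metabolizer.

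The main obstacle is Step~1: the sign and the precise identification of the preferred meridian $\mu$ have to be tracked carefully in Figure~4 to confirm that the two summands contribute the \emph{same} pairing, so that the isotropy condition is the symmetric $a^2+b^2 \equiv 0$ rather than the hyperbolic $a^2-b^2 \equiv 0$, which would produce a different classification of metabolizers. Once Step~1 is in hand, Steps~2 and 3 are essentially standard linear algebra over finite fields.
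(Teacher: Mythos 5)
This proposition is quoted in the paper without proof --- it is imported verbatim as Lemma 3.5 of \cite{HKL16} --- so there is no in-paper argument to compare against; judged on its own, your reconstruction is correct and is essentially the standard computation one would expect behind the cited lemma. The three steps (read off the linking form $\lambda(\mu,\mu)\equiv \pm 2n/(4n^2+1)$ from the surgery matrix $\left(\begin{smallmatrix}-2n & 1\\ 1 & 2n\end{smallmatrix}\right)$, observe $\gcd(2n,4n^2+1)=1$ so isotropy reads $a^2+b^2\equiv 0$, then use square-freeness and CRT to force each $p_i$-component of a metabolizer to be a line with invertible first coordinate) are all sound, and the final rescaling to a generator $(1,b)$ with $b^2+1\equiv 0$ matches the statement; as a sanity check, $b=2n$ is such a solution. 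One remark on the step you flag as the main obstacle: the worry about landing on the hyperbolic condition $a^2-b^2\equiv 0$ cannot actually materialize, because the two surgery diagrams in Figure 4 differ only by infections (which, as you note, do not alter the linking matrix or the choice of $\mu$), and independently the off-diagonal entry of the linking matrix is forced to be $\pm 1$ by Proposition 2.1 (cyclicity of $H_1$ with generator $\mu$), while its sign does not affect the diagonal entries of the inverse. So the identical-summands claim is robust and your argument goes through.
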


Though it was not explicitly stated as a proposition in \cite{HKL16}, the following deep result is the key to the proof of the main theorem of that paper. This technical result is achieved through delicate computations of various knot Floer homology groups and a nice application of the surgery mapping cone formula for Heegaard Floer homology.  We give the summary below and refer the reader to Section 3 of \cite{HKL16} for the details.
\begin{prop}\label{technical}
Fix a positive integer $n$ such that  $4n^2+1$ is square free and is a product of at most two primes, then there are at most four choices for $k$ with $0<k <n/2$ such that there exist an integer $b$ with $b^2+1\equiv 0 \	(\text{mod}\ 4n^2+1)$ and $d(M(K_{D_k,n}), \mathfrak{s}_x)+d(M(K_{U,n}),\mathfrak{s}_{bx})=0$, for all $x\in \mathbb{Z}_{4n^2+1}$.
\end{prop}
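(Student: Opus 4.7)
The plan is to reduce the proposition to a Heegaard Floer calculation plus an elementary counting argument. Both branched double covers $M(K_{D_k,n})$ and $M(K_{U,n})$ are presented as integer surgeries on the explicit links in Figure 4, and both have first homology $\mathbb{Z}_{4n^2+1}$. This setup is perfectly suited to the Oszv\'ath--Szab\'o surgery mapping cone formula, which is the tool I would use throughout.

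First I would compute $d(M(K_{U,n}), \mathfrak{s}_x)$ in closed form as a function of $x\in \mathbb{Z}_{4n^2+1}$. Since $K_{U,n}$ comes from twisting on the unknot, the knot Floer complexes appearing in the mapping cone are essentially trivial, and one obtains a single explicit formula $d(M(K_{U,n}), \mathfrak{s}_x)=f(x,n)$. Next, and this is the crux, I would compute $d(M(K_{D_k,n}), \mathfrak{s}_x)$. The mapping cone formula reduces this to knowing the knot Floer invariants of $D_k=\#^{k}Wh^{+}(T_{2,3})$; by the connect-sum formula for knot Floer homology these are built from $k$ copies of $CFK^{\infty}(D)$, and an explicit model for the latter is available from work of Hedden. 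After bookkeeping, the answer should take the form
$$d(M(K_{D_k,n}), \mathfrak{s}_x)=f(x,n)+g(k,x,n),$$
where $g(k,x,n)$ is a piecewise linear function of $x$ whose jump locations and jump heights are determined explicitly by $k$ and $n$.

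With these two formulas in hand, and using additivity of the $d$-invariant across the connected sum $M(K_{n,k})=M(K_{D_k,n})\#M(K_{U,n})$, the condition in the proposition becomes the identity
$$g(k,x,n)+f(x,n)+f(bx,n)=0 \quad\text{for all } x\in \mathbb{Z}_{4n^2+1}.$$
This is a rigid matching problem: the jumps of $x\mapsto g(k,x,n)$ must coincide with those of $x\mapsto f(x,n)+f(bx,n)$, pinning down the pair $(k,b)$. The number of admissible $b$ is bounded by the count of solutions to $b^2+1\equiv 0 \pmod{4n^2+1}$; since $4n^2+1\equiv 1\pmod 4$ is square free with at most two prime factors (each of which must then be $\equiv 1 \pmod 4$ for $-1$ to be a quadratic residue), the Chinese Remainder Theorem yields at most four such $b$. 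For each admissible $b$, I would argue that at most one $k$ with $0<k<n/2$ satisfies the matching condition by tracking how a single copy of $D$ shifts the jump data of $g$, giving the claimed bound of at most four $k$.

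The main obstacle is the explicit evaluation of $g(k,x,n)$ via the mapping cone formula applied to a pattern involving the $k$-fold connected sum $D_k$. This is precisely the technical heart of \cite{HKL16}: it requires careful control of the $\tau$- and $\nu^{+}$-type filtration data of $CFK^{\infty}(D_k)$, together with tracking how the surgery coefficients produced by the twisting parameter in Figure 1 interact with those filtrations. Everything else, namely counting square roots of $-1$ modulo $4n^2+1$ and matching piecewise linear functions on $\mathbb{Z}_{4n^2+1}$, is elementary once the $d$-invariant formulas are in place.
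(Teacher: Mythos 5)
The paper does not actually prove this proposition: it is stated as a summary of the technical core of \cite{HKL16} (Section 3 there), and the reader is referred to that paper for the argument. Your outline correctly reconstructs the strategy used there --- present both branched double covers by the surgeries of Figure 4, compute correction terms via the Ozsv\'ath--Szab\'o mapping cone formula, and combine an explicit formula for $d(M(K_{U,n}),\mathfrak{s}_x)$ with a comparison term controlled by the knot Floer homology of $D_k$. Your count of admissible $b$ is also correct and is exactly the elementary part: $4n^2+1$ is odd, square free, with at most two prime factors, each necessarily $\equiv 1 \pmod 4$, so $b^2\equiv -1$ has at most $2^2=4$ solutions.

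The gap is that the two load-bearing steps are asserted rather than carried out. First, the ``explicit model'' yielding $d(M(K_{D_k,n}),\mathfrak{s}_x)=f(x,n)+g(k,x,n)$ is precisely the hard computation; in \cite{HKL16} the difference $d(M(K_{D_k,n}),\mathfrak{s}_x)-d(M(K_{U,n}),\mathfrak{s}_x)$ is expressed through the non-negative integers $V_j$ of connected sums of $D$, and determining which of these vanish requires Hedden's computation for Whitehead doubles together with a careful run of the surgery formula --- none of which is reproduced or even precisely stated in your sketch. Second, your claim that at most one $k$ works for each $b$ is the actual content of the proposition and does not follow from a generic ``jump matching'' heuristic: one needs a quantity (for instance, the number of $\text{spin}^c$ structures at which $g(k,\cdot,n)$ is nonzero) proved to be strictly monotone in $k$ on the range $0<k<n/2$; this is where the hypothesis $k<n/2$ enters, and your sketch never uses it. As a plan your proposal points at the right tools and the right reference, but as written it defers exactly the steps that make the statement true.
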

\section{Proof of the main theorem}
This section is devoted to proving Theorem \ref{main}. We begin by showing certain $K_{n,k}$ is not in $\mathcal{T}_1$.
\begin{prop}
Let $n>20$ be an integer such that $4n^2+1$ is square free and is a product of at most two primes, then one can choose some $k$, such that $K_{n,k}\in \mathcal{T}_0$ but $K_{n,k}\notin \mathcal{T}_1$.
\end{prop}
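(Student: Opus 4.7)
The easy containment $K_{n,k}\in \mathcal{T}_{0}$ is immediate from Proposition \ref{K_n,k are bipolar}: it holds whenever $n\ge 4k$, i.e.\ whenever $k\le n/4$. The substance of the statement lies in the complementary assertion $K_{n,k}\notin\mathcal{T}_{1}$, and I plan to establish it by contradiction.

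Assume $K_{n,k}\in \mathcal{T}_{1}\subset \mathcal{B}_{1}$. Applying Theorem \ref{d invariant and 1-bipolar knots} to $K_{n,k}$ produces metabolizers $G_{1},G_{2}<H_{1}(M(K_{n,k}))$ of the $\mathbb{Q}/\mathbb{Z}$-linking form with $d(M(K_{n,k}),\mathfrak{s}_{z})\le 0$ on $G_{1}$ and $d(M(K_{n,k}),\mathfrak{s}_{z})\ge 0$ on $G_{2}$. Because $K_{n,k}$ is negative amphichiral, $-K_{n,k}$ is topologically concordant to $K_{n,k}$ and therefore also belongs to $\mathcal{T}_{1}$; applying Theorem \ref{d invariant and 1-bipolar knots} to $-K_{n,k}$ and using $M(-K_{n,k})=-M(K_{n,k})$ together with $d(-Y,\mathfrak{s})=-d(Y,\mathfrak{s})$ yields two further metabolizers of $H_{1}(M(K_{n,k}))$ with the opposite inequalities. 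The first and main step of the plan is to combine these four metabolizers, using the explicit description provided by Proposition \ref{metabolizer} (which forces every metabolizer of $H_{1}(M(K_{n,k}))\cong \mathbb{Z}_{4n^{2}+1}\oplus\mathbb{Z}_{4n^{2}+1}$ to have the form $\langle (1,b)\rangle$ with $b^{2}+1\equiv 0\ (\text{mod}\ 4n^{2}+1)$), together with the amphichiral symmetry of $M(K_{n,k})$, to collapse these one-sided inequalities into an equality on a single metabolizer. Additivity of the $d$-invariant under the connected-sum decomposition $M(K_{n,k})=M(K_{D_{k},n})\# M(K_{U,n})$ then converts this equality into the identity
\[
d(M(K_{D_{k},n}),\mathfrak{s}_{x})+d(M(K_{U,n}),\mathfrak{s}_{bx})=0
\]
for some $b$ with $b^{2}+1\equiv 0\ (\text{mod}\ 4n^{2}+1)$ and every $x\in \mathbb{Z}_{4n^{2}+1}$.

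The remaining step is a counting argument built on Proposition \ref{technical}: for $n$ as in the hypothesis, at most four values of $k$ with $0<k<n/2$ can admit such a $b$. Since $n>20$ gives $\lfloor n/4\rfloor\ge 5$, the interval $[1,n/4]\cap \mathbb{Z}$ contains at least five integers, all of which lie in $(0,n/2)$. Hence at least one integer $k$ with $1\le k\le n/4$ falls outside the four exceptional values from Proposition \ref{technical}. For this $k$, Proposition \ref{K_n,k are bipolar} yields $K_{n,k}\in \mathcal{T}_{0}$, while the first step together with Proposition \ref{technical} contradicts $K_{n,k}\in \mathcal{T}_{1}$.

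The main obstacle is the first step: upgrading the one-sided inequalities of Theorem \ref{d invariant and 1-bipolar knots} on two potentially distinct metabolizers to an equality on a single one. The negative amphichirality of $K_{n,k}$ is exactly the extra input that permits this. It imposes a symmetry on the metabolizer data which, coupled with the short list of candidate metabolizers supplied by Proposition \ref{metabolizer}, forces the needed collapse; everything after this reduction is bookkeeping.
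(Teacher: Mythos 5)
Your overall architecture matches the paper's: assume $K_{n,k}\in\mathcal{T}_1$, invoke Theorem \ref{d invariant and 1-bipolar knots} and Proposition \ref{metabolizer} to get two metabolizers $\langle(1,b_1)\rangle$, $\langle(1,b_2)\rangle$ with one-sided $d$-invariant inequalities, upgrade these to the equality $d(M(K_{D_k,n}),\mathfrak{s}_x)+d(M(K_{U,n}),\mathfrak{s}_{bx})=0$ for a single $b$, and then use the counting in Proposition \ref{technical} together with $n>20$ (so at least five choices of $k\le n/4$, versus at most four exceptional ones) to pick a good $k$. The counting half is correct and identical to the paper. But the step you yourself flag as ``the main obstacle'' --- collapsing the two one-sided inequalities into an equality on a single metabolizer --- is exactly the mathematical content of the proof, and your proposal does not supply an argument for it. You propose to get it from negative amphichirality: apply Theorem \ref{d invariant and 1-bipolar knots} to $-K_{n,k}$ as well, obtain four metabolizers, and assert that ``amphichiral symmetry forces the needed collapse.'' This is not substantiated, and it is not clear it can be: the metabolizers produced by the theorem for $K_{n,k}$ and for $-K_{n,k}$ are a priori unrelated (they could correspond to different roots $b$ of $b^2+1\equiv 0$), so having two metabolizers with $d\ge 0$ and two with $d\le 0$ does not by itself put an equality on any one of them.

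The paper closes this gap with a different and quite clean device that uses no amphichirality at all: sum each inequality over all $x\in\mathbb{Z}_{4n^2+1}$. Since each $b_i$ is a unit mod $4n^2+1$ (because $b_i^2\equiv -1$), the reindexing $x\mapsto b_ix$ is a bijection, so both sums equal the same metabolizer-independent quantity $\sum_x\bigl(d(M(K_{D_k,n}),\mathfrak{s}_x)+d(M(K_{U,n}),\mathfrak{s}_x)\bigr)$. One inequality forces this total to be $\ge 0$ and the other forces it to be $\le 0$, so it vanishes; and since every summand in the $b_1$-sum is nonnegative with total zero, every summand vanishes, giving the desired pointwise equality with $b=b_1$. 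You should replace your amphichirality step with this averaging argument (or find an actual proof that the symmetry identifies the relevant metabolizers); as written, the central step of your proof is missing.
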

\begin{proof}
We claim that if $K_{n,k}\in \mathcal{T}_1$, then there is some $b$ with $b^2+1\equiv 0 \	(\text{mod}\ 4n^2+1)$ and 
\begin{equation}
d(M(K_{D_k,n}), \mathfrak{s}_x)+d(M(K_{U,n}),\mathfrak{s}_{bx})=0, \,\text{for all}\, x\in \mathbb{Z}_{4n^2+1}.
\end{equation}
First we give a proof for the proposition assuming this claim for a moment. Note $n/4>5$ by our choice of $n$, it then follows from Proposition \ref{technical} that we can choose some $k$ with $0<k\leq n/4$ such that Equation (4.1) is violated for any possible metabolizer.  Hence $K_{k,n}\in \mathcal{T}_0$ by Proposition \ref{K_n,k are bipolar} but $K_{k,n}\notin \mathcal{T}_1$ by the claim.\\
Now we move to prove the above claim. Note by Theorem \ref{d invariant and 1-bipolar knots} and Proposition \ref{metabolizer} , there exist $b_i$, $i=1,2$, such that $b_i^2+1\equiv 0 \	(\text{mod}\ 4n^2+1)$ and 
\begin{equation}
d(M(K_{D_k,n}), \mathfrak{s}_x)+d(M(K_{U,n}),\mathfrak{s}_{b_1x})\geq 0,
\end{equation}
and
\begin{equation}
d(M(K_{D_k,n}), \mathfrak{s}_x)+d(M(K_{U,n}),\mathfrak{s}_{b_2x})\leq 0,
\end{equation}
for all $x\in \mathbb{Z}_{4n^2+1}$. Note $b_i$ is relatively prime to $4n^2+1$ since $b_i^2+1\equiv 0 \	(\text{mod}\ 4n^2+1)$. Therefore, from (4.2) we have
\begin{equation}
\begin{aligned}
&\sum_{x\in \mathbb{Z}_{4n^2+1}} (d(M(K_{D_k,n}), \mathfrak{s}_x)+d(M(K_{U,n}),\mathfrak{s}_{x}))\\
=&\sum_{x\in \mathbb{Z}_{4n^2+1}} (d(M(K_{D_k,n}), \mathfrak{s}_x)+d(M(K_{U,n}),\mathfrak{s}_{b_1x}))\geq 0.
\end{aligned}
\end{equation}
On the other hand from (4.3) we have 
\begin{equation}
\begin{aligned}
&\sum_{x\in \mathbb{Z}_{4n^2+1}} (d(M(K_{D_k,n}), \mathfrak{s}_x)+d(M(K_{U,n}),\mathfrak{s}_{x}))\\
=&\sum_{x\in \mathbb{Z}_{4n^2+1}} (d(M(K_{D_k,n}), \mathfrak{s}_x)+d(M(K_{U,n}),\mathfrak{s}_{b_2x}))\leq 0.
\end{aligned}
\end{equation}
From (4.4) and (4.5) we get 
\begin{equation}
\sum_{x\in \mathbb{Z}_{4n^2+1}} d(M(K_{D_k,n}), \mathfrak{s}_x)+d(M(K_{U,n}),\mathfrak{s}_{b_1x})=0.
\end{equation}
(4.6) and (4.2) then implies 
\begin{displaymath}
d(M(K_{D_k,n}), \mathfrak{s}_x)+d(M(K_{U,n}),\mathfrak{s}_{b_1x})= 0,\  \text{for all}\, x\in \mathbb{Z}_{4n^2+1}.
\end{displaymath}
This finishes the proof of the claim. 
\end{proof}
Now we are ready to specify a collection of parameters $k$ and $n$ so that the corresponding collection of knots $K_{k,n}$ generate $\mathbb{Z}_2^{\infty}$ in $\mathcal{T}_0/\mathcal{T}_1$. First we recall the following number theoretic result appeared in \cite{HKL16}, proved using a theorem due to Iwaniec \cite{Iwa78}.
\begin{prop}\label{nubertheory}
There exists an infinite set $\mathcal{N}$ of positive integers such that for all $n\in \mathcal{N}$, $4n^2+1$ is square free and is a product of at most two primes, furthermore, for all $n, m\in \mathcal{N}$ such that $m\neq n$, $4n^2+1$ and $4m^2+1$ are relatively prime.
\end{prop}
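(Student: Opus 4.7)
The plan is to combine Iwaniec's 1978 theorem---which produces infinitely many $n$ with $4n^{2}+1$ a product of at most two primes---with an inductive sieve step that enforces pairwise coprimality. Write $f(n) = 4n^{2}+1$ and set $\mathcal{N}' := \{ n \geq 1 : f(n) \text{ is a product of at most two primes}\}$; Iwaniec's theorem gives $\mathcal{N}'$ infinite. The square-freeness in the proposition is automatic: if $f(n)=p^{2}$ then $p^{2}-(2n)^{2}=1$, forcing $n=0$, so for every $n\in \mathcal{N}'$ with $n\geq 1$ the value $f(n)$ is either prime or a product of two \emph{distinct} primes.

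It remains to thin $\mathcal{N}'$ to a subset $\mathcal{N}$ on which the values $f(n)$ are pairwise coprime. I would construct $\mathcal{N}$ greedily. Pick $n_{1} \in \mathcal{N}'$ arbitrarily. Given $n_{1},\ldots,n_{k}$, let $S_{k}$ be the finite set of primes dividing $M_{k} := \prod_{j=1}^{k} f(n_{j})$. Every $p\in S_k$ satisfies $p\geq 5$, since $f$ is never divisible by $2$ or $3$. For each $p \in S_{k}$, the congruence $f(x) \equiv 0 \pmod{p}$ has exactly two roots $\pm c_{p}$ (because $-1$ is a quadratic residue mod $p$, as witnessed by $2n_j$ for any $j$ with $p\mid f(n_j)$), and any $n_{k+1}$ avoiding those $2\cdot|S_{k}|$ residue classes will satisfy $\gcd(f(n_{k+1}), M_{k}) = 1$.

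The nontrivial point is to ensure that such an $n_{k+1}$ can be chosen inside $\mathcal{N}'$. I would reduce this to a single application of Iwaniec by a shift. Let $M = \prod_{p \in S_{k}} p$ and, using the Chinese remainder theorem, pick $a$ with $a \not\equiv \pm c_{p} \pmod{p}$ for every $p \in S_{k}$; this is possible because each $p\geq 5$ leaves at least three admissible residues. Then apply Iwaniec's theorem to the shifted irreducible quadratic $g(m) := f(Mm+a) = 4(Mm+a)^{2}+1$. One checks $g$ has no fixed prime divisor: for $p\in S_k$ this follows from $g(m)\equiv 4a^2+1\not\equiv 0\pmod p$, while for $p\notin S_k$ oddness of $g$ rules out $p=2$ and $p\nmid M$ gives a nonzero leading coefficient mod $p$. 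Hence Iwaniec's theorem supplies infinitely many $m$ with $g(m)$ a product of at most two primes, and any one of them yields a valid $n_{k+1}=Mm+a$. Iterating produces the desired infinite set $\mathcal{N}$.

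The main obstacle I expect is invoking Iwaniec's theorem in the required form---namely, uniformly across the family of irreducible quadratics $4(Mx+a)^{2}+1$ as $M$ and $a$ vary over the parameters produced by the induction. This uniformity is essentially built into Iwaniec's sieve, which is stated for general irreducible $ax^{2}+bx+c$ satisfying the standard admissibility hypotheses, so the step is not difficult in principle but does rely on a careful reading of Iwaniec. Everything else---square-freeness, the CRT choice of $a$, the fixed-divisor check, and the greedy construction---is elementary.
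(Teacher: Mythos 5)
Your proof is correct and takes essentially the same route as the paper, which does not argue this proposition itself but quotes it from \cite{HKL16} as a consequence of Iwaniec's theorem on almost-primes represented by irreducible quadratics: square-freeness via the impossibility of $p^{2}-(2n)^{2}=1$ for $n\geq 1$, and pairwise coprimality by greedily re-applying Iwaniec to the shifted admissible quadratics $4(Mm+a)^{2}+1$. The one external input you flag --- that Iwaniec's result holds for a general irreducible $ax^{2}+bx+c$ with $a>0$, $\gcd(a,b,c)=1$, and no fixed prime divisor --- is indeed the form proved in \cite{Iwa78}, and your admissibility checks for the shifted polynomial are all correct.
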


Take $\mathcal{N}$ to be a set as in the above proposition with the further requirement that $n>20$,  for all $n \in \mathcal{N}$. For each $n\in \mathcal{N}$, pick a $k=k(n)$ that depends on $n$ as in Proposition 4.1, then we form a family of knots $\mathcal{F}=\{K_{k,n}|\ n\in\mathcal{N},\  k=k(n)\}$ such that each member in $\mathcal{F}$ is 0-bipolar and yet not 1-bipolar.
\begin{proof}[Proof of Theorem 1.1]
Consider the subgroup in $\mathcal{T}_0$ generated by $\mathcal{F}$, we claim that it does not contain any element in $\mathcal{T}_1$. We prove this by contradiction, suppose $$L=K_{k_1,n_1}\#K_{k_2,n_2}\#\cdots\#K_{k_l,n_l}\in \mathcal{T}_1.$$ Without loss of generality, assume $n_i$'s are all distinct for $i=1,...,l$ since the knots are of order two. 
Since for distinct $m$ and $n$ in $\mathcal{N}$, $4n^2+1$ and $4m^2+1$ are relatively prime, each metabolizer on $H_1(M(L))$ splits as $G_1\oplus G_2$, where $G_1$ is a metabolizer on $H_1(M(K_{k_1,n_1}))$ and $G_2$ is a metabolizer on $H_1(M(K_{k_2,n_2}\#\cdots\#K_{k_l,n_l}))$. Consider the subgroup $G_1\oplus 0$ of metabolizer, then by Proposition \ref{metabolizer}, Theorem \ref{d invariant and 1-bipolar knots} and additivity of $d$-invariant, there exist $b_i$, $i=1,2$, such that $b_i^2+1\equiv 0 \	(\text{mod}\ 4n^2+1)$, and for all $x\in \mathbb{Z}_{4n_1^2+1}$,
\begin{displaymath}
\begin{aligned}
&d(M(K_{D_{k_1},n_1}), \mathfrak{s}_x)+d(M(K_{U,n_1}),\mathfrak{s}_{b_1x})+d(M(K_{k_2,n_2}\#...\#K_{k_l,n_l}),\mathfrak{s}_0)\\
=&d(M(K_{D_{k_1},n_1}), \mathfrak{s}_x)+d(M(K_{U,n_1}),\mathfrak{s}_{b_1x})\geq  0,
\end{aligned}
\end{displaymath}
and
\begin{displaymath}
\begin{aligned}
&d(M(K_{D_{k_1},n_1}), \mathfrak{s}_x)+d(M(K_{U,n_1}),\mathfrak{s}_{b_2x})+d(M(K_{k_2,n_2}\#...\#K_{k_l,n_l}),\mathfrak{s}_0)\\
=&d(M(K_{D_{k_1},n_1}), \mathfrak{s}_x)+d(M(K_{U,n_1}),\mathfrak{s}_{b_2x})\leq  0,
\end{aligned}
\end{displaymath}
where in the above equations we used the fact $d(M(K),\mathfrak{s}_0)=0$ when $K$ is a knot of smooth concordance order two \cite{MO07}.

The same argument in the proof of Proposition 4.1 shows $d(M(K_{D_{k_1},n_1}), \mathfrak{s}_x)+d(M(K_{U,n_1}),\mathfrak{s}_{b_1x})=0$, for all $x\in \mathbb{Z}_{4n_1^2+1}$, which contradicts our choice of $k_1$. Therefore, $L\notin \mathcal{T}_1$.
\end{proof}

\bibliographystyle{abbrv}\footnotesize
\bibliography{bipolar}
\end{document}